\def\Radius     {.4}
\def\Hole{.1}
\def\Ecc {-0.05}
\newcommand{\KP}[1]{
  \begin{tikzpicture}[baseline=-0.65ex,scale=0.5]
  #1
  \end{tikzpicture}}
\newcommand{\KPA}{%
  \KP{\filldraw[color=gray, fill=none, thick] circle (0.3);}%
}
\newcommand{\KPB}{%
  \KP{
    \draw[color=gray,thick] (-\Radius,\Radius) -- (\Radius,-\Radius);
    \draw[color=gray,thick] (-\Radius,-\Radius) -- (-\Hole,-\Hole);
    \draw[color=gray,thick] (\Hole,\Hole) -- (\Radius,\Radius);
  }}
\newcommand{\KPC}{%
  \KP{%
    \draw[color=gray,thick] (-\Radius,\Radius) .. controls (0,-\Ecc) .. (\Radius,\Radius);
    \draw[color=gray,thick] (-\Radius,-\Radius) .. controls (0,\Ecc) .. (\Radius,-\Radius);
  }}
\newcommand{\KPD}{%
  \KP{%
    \draw[color=gray,thick] (-\Radius,-\Radius) .. controls (\Ecc,0) .. (-\Radius,\Radius);
    \draw[color=gray,thick] (\Radius,-\Radius) .. controls (-\Ecc,0) .. (\Radius,\Radius);
  }}
\newcommand{\KRr}{%
  \KP{%
\draw[color=gray,thick] (-\Hole,-\Hole) .. controls (-\Radius,-\Radius) and (\Radius,-\Radius) .. (\Hole,-\Hole);
\draw[color=gray,thick] (-\Radius,\Radius)--(\Hole,-\Hole);
\draw[color=gray,thick] (\Hole,\Hole)--(\Radius,\Radius);
  }}
\newcommand{\KRl}{%
  \KP{%
\draw[color=gray,thick] (-\Hole,-\Hole) .. controls (-\Radius,-\Radius) and (\Radius,-\Radius) .. (\Hole,-\Hole);
\draw[color=gray,thick] (-\Radius,\Radius)--(-\Hole,\Hole);
\draw[color=gray,thick] (-\Hole,-\Hole)--(\Radius,\Radius);
  }}
\newcommand{\R}{{\mathbb R}}
\newcommand{\C}{{\mathbb C}}
\theoremstyle{plain}
\newtheorem{teo}{Theorem}[section]
\newtheorem{lema}[teo]{Lemma}
\newtheorem{cor}[teo]{Corollary}
\theoremstyle{definition}
\newtheorem{defi}{Definition}[section]
\theoremstyle{remark}
\begin{document}

\nocite{*}

\title{Negami's like splitting formula for the Jones polynomial}

\author{J.M. Burgos}
\address{Departamento de Matem\'aticas, Centro de Investigaci\'on y de Estudios Avanzados, Av. Instituto Polit\'ecnico Nacional 2508, Col. San Pedro Zacatenco, C.P. 07360 Ciudad de M\'exico, M\'exico}
\email{burgos@math.cinvestav.mx}

\subjclass{15A54, 06B99, 05C10, 05C22, 05C31, 57M27}
\keywords{Splitting formula, Lindstr\"om matrix, Jones polynomial}

\begin{abstract}
Negami's splitting formula cannot be directly applied to get a Jones polynomial splitting formula for the contraction of certain planar graphs in the decomposition become non planar. Therefore, we build a Negami's like splitting formula from the scratch. Now, the new splitting matrix doesn't have the form of a Lindstr\"om matrix and it would be interesting to have similar results for it.
\end{abstract}

\maketitle

\section{Introduction}

Recently, we showed that Negami's splitting formula for the Tutte polynomial \cite{Negami} holds in the specialization $xy=1$ where the Jones polynomial is defined \cite{Burgos}.

A priori, following Theorems relating the Jones polynomial of a link with the Tutte polynomial of its associated signed planar graph \cite{Thistlethwaite}, \cite{Kauffman_signed}, it seems plausible that expressing each term of the Negami's splitting formula in terms of the corresponding Jones polynomials would give us a splitting formula for it. However, this is not the case.

Negami's splitting formula cannot be written in terms of the Jones polynomial for there are planar graphs whose contractions are not planar and do not correspond to any link diagram. As an example consider an alternating link $L$ whose associated planar graph $G$ can be separated into two planar subgraphs $G_1$ and $G_2$ sharing only four vertices $v_1, \ldots v_{4}$ such that their union is the whole graph and $G_1$ is the planar graph shown in Figure \ref{Example_Graph} \footnote{In terms of section \ref{Surgeries}, the full surgery $L_{1}^{full}$ corresponding to the graph $G_1$ in Figure \ref{Example_Graph} is the alternating 11-crossing knot $K11a100$ in the Hoste-Thistlethwaite Knot Table.}. One of the terms in the Negami's splitting formula requires the identification $v_1= v_3$ and $v_2=v_4$ of the vertices in $G_1$. However, by the Kuratowski's Theorem, this contraction of the graph $G_1$ is not planar for removing the edge $a$ and contracting the vertices gives the complete $K_5$ graph. In particular, there is no link associated to this particular term in the splitting.

It is necessary then to have a Negami's like splitting formula for the Jones polynomial. In contrary to the Negami's splitting formula, the argument above shows that the new splitting formula cannot have all of its terms indexed by the whole set of partitions.

\begin{figure}
\begin{center}
  \includegraphics[width=0.28\textwidth]{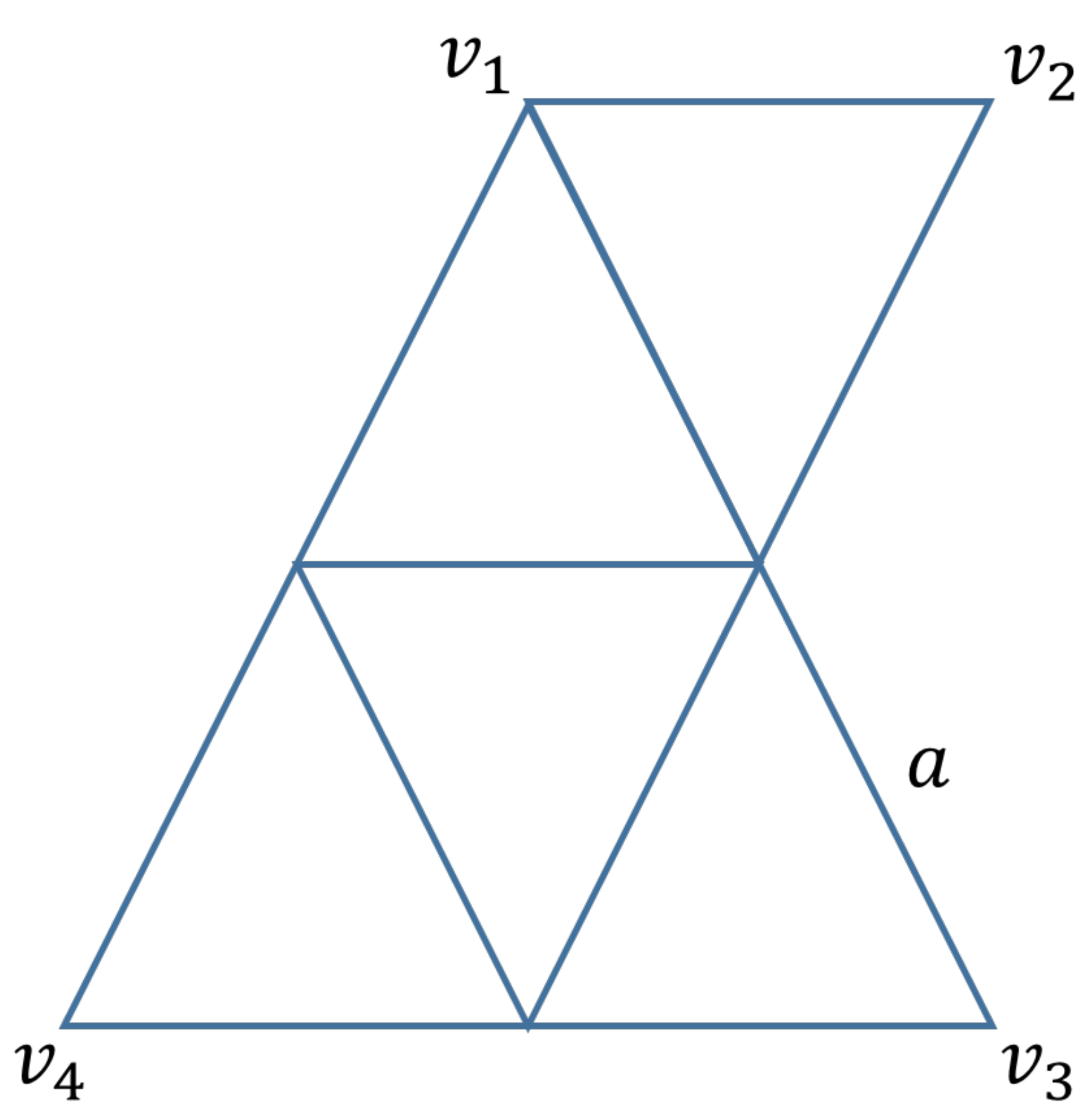}\\
  \end{center}
  \caption{An example of a planar graph where the contractions $v_1= v_3$ and $v_2=v_4$ give a non planar graph: Removing the edge $a$ gives the complete $K_5$ graph.}\label{Example_Graph}
\end{figure}

We say that a Jordan curve $C$ is an alternate cut of an oriented link $L$ if it is transversal to $L$ and walking along the curve in some direction the orientation of the $2n$ intersection points alternate. The cut $C$ separates the link diagram $L$ in two tangles $T_{1}$ and $T_{2}$ and all of the possible non crossing closures of these are the surgeries $L_{1}^{\mathcal{A}}$ and $L_{2}^{\mathcal{B}}$ respectively of the link diagram $L$, indexed by non crossing partitions $\mathcal{A}$ and $\mathcal{B}$. We say that the cut is non trivial if $n>0$. We denote by $\mathcal{A}\wedge\mathcal{B}$ the coarser partition finer than $\mathcal{A}$ and $\mathcal{B}$ and by $\mathcal{A}\vee\mathcal{B}$ the finer partition coarser than $\mathcal{A}$ and $\mathcal{B}$. We denote by $|S|$ the cardinal of a set $S$ and by $NC_n$ the semilattice of noncrossing partitions of $n$ elements.

The following is the main result of the paper:

\begin{teo}\label{Splitting_Jones_Intro}
Consider a non trivial alternate cut $C$ of a link $L$ with $2n$ intersection points. Then:
\begin{equation}\label{Formula_Intro}
J(L) = \sum_{\mathcal{A}, \mathcal{B}\in NC_n}\ c_{\mathcal{A} \mathcal{B}}\ J\left( L_{1}^{\mathcal{A}}\right)J\left( L_{2}^{\mathcal{B}}\right)
\end{equation}
where the matrix $\left(c_{\mathcal{A} \mathcal{B}}\right)$ is the inverse of the matrix $\left(d_{\mathcal{A} \mathcal{B}}\right)$ with entries:
\begin{equation}\label{Lindstrom_variant}
d_{\mathcal{A} \mathcal{B}}= (-t^{1/2}-t^{-1/2})^{n-|\mathcal{A}\wedge\mathcal{B}|+|\mathcal{A}\vee\mathcal{B}|-1}
\end{equation}
\end{teo}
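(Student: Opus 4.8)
The plan is to reduce the statement to the corresponding identity for the (normalized) Kauffman bracket $\langle\cdot\rangle$ and then to recognize $(d_{\mathcal{A}\mathcal{B}})$ as the Gram matrix of the Temperley--Lieb pairing, so that the theorem becomes a short piece of linear algebra resting on one combinatorial identity. First I would fix the normalization $\langle\bigcirc\rangle=1$, with a disjoint loop multiplying the bracket by $\delta=-t^{1/2}-t^{-1/2}$, and write $J=(-A)^{-3w}\langle\cdot\rangle$ under the usual substitution making $-A^{2}-A^{-2}=\delta$. Since $C$ is transversal to $L$, every crossing of $L$ lies in exactly one of $T_1,T_2$ and the closing arcs of every surgery are crossingless; hence $w(L)=w(T_1)+w(T_2)$ and $w(L_i^{\mathcal{A}})=w(T_i)$. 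The writhe prefactors then factor as $(-A)^{-3w(T_1)}(-A)^{-3w(T_2)}$ on both sides of \eqref{Formula_Intro} and cancel, so it suffices to prove
\[
\langle L\rangle=\sum_{\mathcal{A},\mathcal{B}\in NC_n} c_{\mathcal{A}\mathcal{B}}\,\langle L_1^{\mathcal{A}}\rangle\,\langle L_2^{\mathcal{B}}\rangle .
\]

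Next I would expand each tangle in the Temperley--Lieb basis. Resolving every internal crossing of $T_i$ by the Kauffman relation writes $T_i$ as a $\mathbb{Z}[A^{\pm1}]$-combination of the $C_n$ non-crossing planar tangles on the $2n$ boundary points, which under the standard bijection are indexed by $NC_n$; call the coefficient vectors $a=(a_{\mathcal{C}})$ and $b=(b_{\mathcal{C}})$. Gluing two planar tangles $P_{\mathcal{A}},P_{\mathcal{B}}$ along $C$ closes the diagram into $\ell(\mathcal{A},\mathcal{B})$ disjoint loops, so its bracket equals $\delta^{\,\ell(\mathcal{A},\mathcal{B})-1}$; denote this symmetric matrix by $D=(D_{\mathcal{A}\mathcal{B}})$. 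Since $L_i^{\mathcal{A}}$ is exactly $T_i$ closed by $P_{\mathcal{A}}$, and $L$ is $T_1$ glued to $T_2$, expanding in the basis gives simultaneously
\[
\langle L\rangle=a^{T}Db,\qquad \langle L_1^{\mathcal{A}}\rangle=(Da)_{\mathcal{A}},\qquad \langle L_2^{\mathcal{B}}\rangle=(Db)_{\mathcal{B}},
\]
all governed by the one matrix $D$.

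The crux, and the step I expect to be the main obstacle, is to identify $D$ with $(d_{\mathcal{A}\mathcal{B}})$, i.e. to prove the loop formula
\[
\ell(\mathcal{A},\mathcal{B})=n-|\mathcal{A}\wedge\mathcal{B}|+|\mathcal{A}\vee\mathcal{B}| .
\]
The plan is to proceed by induction on $n$, peeling off an innermost arc of one matching and tracking the simultaneous change in $\ell$, in the block count $|\mathcal{A}\wedge\mathcal{B}|$ of the meet, and in $|\mathcal{A}\vee\mathcal{B}|$ of the join; equivalently one follows the connected components of $P_{\mathcal{A}}\cup P_{\mathcal{B}}$ through the product of the two fixed-point-free involutions of the matchings and matches its cycle structure with the lattice data (the meet accounting for the ``parallel'' arcs and the join for the merging across the two sides). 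The extremes give reassuring checks: $\mathcal{A}=\mathcal{B}$ yields $n$ loops and $n-|\mathcal{A}|+|\mathcal{A}|=n$, and the $n=2$ matrix $\left(\begin{smallmatrix}\delta&1\\ 1&\delta\end{smallmatrix}\right)$ is recovered. Care is needed so that the chosen bijection $NC_n\to\{\text{non-crossing matchings}\}$ is the one for which meet and join are computed inside $NC_n$ rather than in the full partition lattice.

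Finally I would invoke invertibility of $D$: its determinant is the Temperley--Lieb Gram determinant, a nonzero polynomial in $\delta$, so $D$ is invertible over $\mathbb{Q}(t^{1/2})$ and $C:=D^{-1}=(c_{\mathcal{A}\mathcal{B}})$ is well defined and, like $D$, symmetric. Writing $x_{\mathcal{A}}=\langle L_1^{\mathcal{A}}\rangle$ and $y_{\mathcal{B}}=\langle L_2^{\mathcal{B}}\rangle$, the three displayed identities read $x=Da$ and $y=Db$, whence $a=D^{-1}x$, $b=D^{-1}y$ and
\[
\langle L\rangle=a^{T}Db=(D^{-1}x)^{T}D(D^{-1}y)=x^{T}D^{-1}y=\sum_{\mathcal{A},\mathcal{B}}c_{\mathcal{A}\mathcal{B}}\,\langle L_1^{\mathcal{A}}\rangle\,\langle L_2^{\mathcal{B}}\rangle ,
\]
which together with the writhe reduction of the first step is precisely \eqref{Formula_Intro}.
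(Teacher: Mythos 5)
Your proposal is essentially the paper's own proof in Temperley--Lieb clothing: your coefficient vector $a=(a_{\mathcal{C}})$ is exactly the paper's partial bracket $\left(\left\langle L_1\right\rangle_{\mathcal{C}}\right)$, your Gram matrix $D$ is the paper's matrix $M=(a_{\mathcal{A}\mathcal{B}})$, your three displayed identities are Lemma \ref{Identity_One} and Corollary \ref{Identity_Two}, the computation $a^{T}Db=x^{T}D^{-1}y$ is Corollary \ref{Splitting_Kauffman} verbatim, and the loop formula is Lemma \ref{Identity_Lemma} (which the paper proves by reducing to the case $|\mathcal{A}\vee\mathcal{B}|=1$ and inclusion--exclusion rather than by your arc-peeling induction). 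So the approach is the same; the only point that needs correcting is your parenthetical caution at the end of the third paragraph, which points in the wrong direction. For the identity $\ell(\mathcal{A},\mathcal{B})=n-|\mathcal{A}\wedge\mathcal{B}|+|\mathcal{A}\vee\mathcal{B}|$ to hold, the join must be taken in the \emph{full} partition lattice $\Gamma_n$ (where it may leave $NC_n$), not inside $NC_n$: take $n=4$, $\mathcal{A}=\{\{1\},\{2,4\},\{3\}\}$, $\mathcal{B}=\{\{1,3\},\{2\},\{4\}\}$, both non-crossing. Gluing the corresponding matchings produces the two loops $\{a_1,b_1,a_3,b_3\}$ and $\{a_2,b_2,a_4,b_4\}$, and indeed $n-|\mathcal{A}\wedge\mathcal{B}|+|\mathcal{A}\vee_{\Gamma_4}\mathcal{B}|=4-4+2=2$ since $\mathcal{A}\vee_{\Gamma_4}\mathcal{B}=\{\{1,3\},\{2,4\}\}$ is the (crossing) set-theoretic join; the join computed inside $NC_4$ is the one-block partition and would give the wrong count $1$. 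This is consistent with the paper, which defines $\vee$ in $\Gamma_n$ and explicitly notes that $NC_n$ is not closed under it. With that convention fixed, your argument goes through and coincides with the paper's.
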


It is interesting to compare the new splitting formula \eqref{Formula_Intro} with the one we naively would have expected directly from the Negami's splitting formula: Sum over the whole set of pairs of partitions (not just the non crossing ones) and replace $d_{\mathcal{A} \mathcal{B}}$ in Theorem \ref{Splitting_Jones_Intro} by the following:
\begin{equation}\label{Lindstrom}
d'_{\mathcal{A} \mathcal{B}}= (-t^{1/2}-t^{-1/2})^{|\mathcal{A}\vee\mathcal{B}|-1}
\end{equation}
The matrix \eqref{Lindstrom} has the form of a Lindstr\"om matrix \cite{Lindstrom}. Heuristically, the extra term $n-|\mathcal{A}\wedge\mathcal{B}|$ in the exponent is compensating the absence of the forbidden crossing partitions in the sum\footnote{The actual explanation of this term is the number of outer circles in a smoothing, Lemma \ref{Identity_Lemma}.}. The determinant of the matrix \eqref{Lindstrom} was calculated in \cite{Jackson} and \cite{BurgosII}. It would be very interesting to have similar results for the matrix \eqref{Lindstrom_variant}.

\begin{figure}
\begin{center}
  \includegraphics[width=0.4\textwidth]{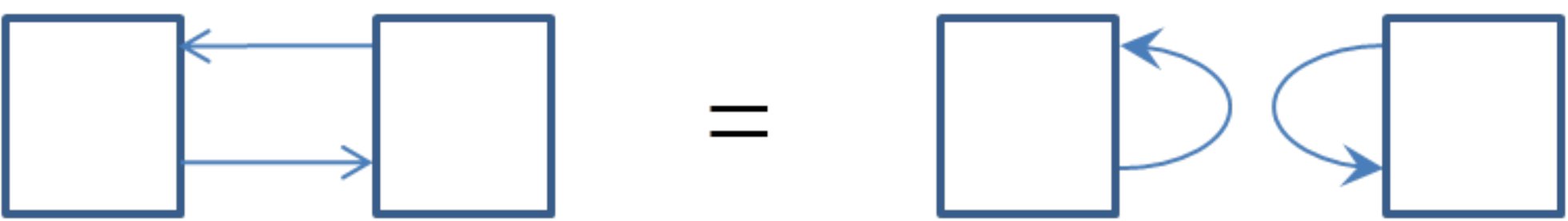}\\
  \end{center}
  \caption{Schematic picture of the Jones Polynomial factorization formula of a connected sum.}\label{Jones_Splitting_I}
\end{figure}

The case $n$ equals one reproduces the well known factorization of a connected sum:
$$J(L_{1}\# L_{2})=J(L_{1})J(L_{2})$$

The cases $n$ equals one, two and three are illustrated in Figures \ref{Jones_Splitting_I}, \ref{Jones_Splitting_II} and \ref{Jones_Splitting_III} respectively.

\begin{figure}
\begin{center}
  \includegraphics[width=.75\textwidth]{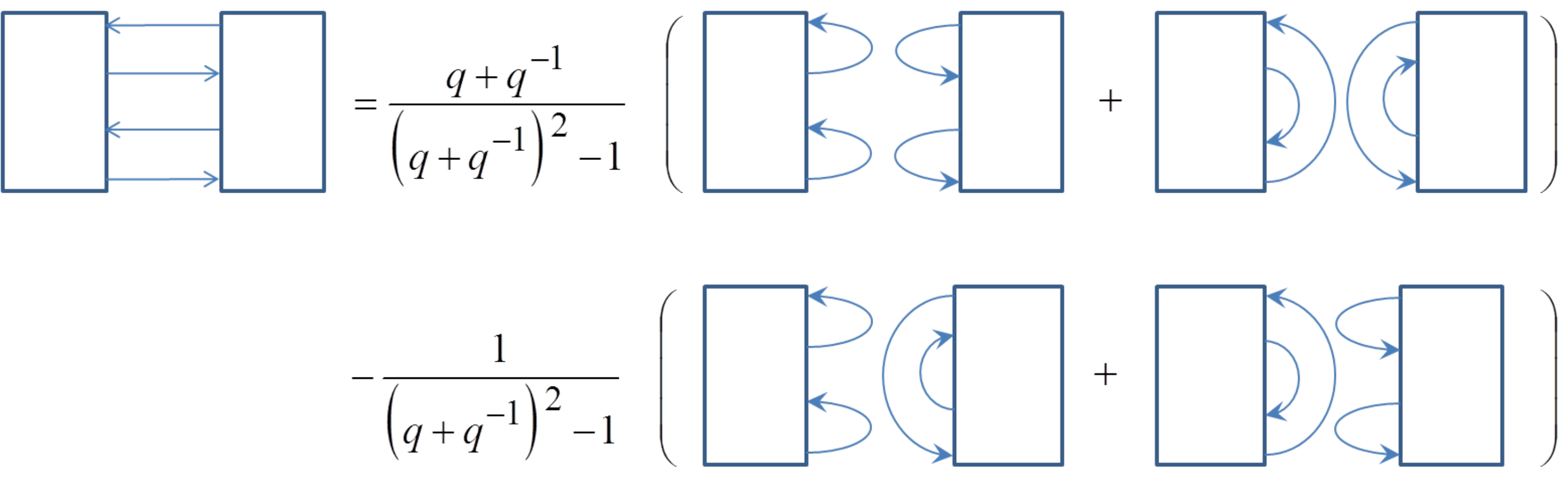}\\
  \end{center}
  \caption{Schematic picture of the Jones Polynomial splitting formula \eqref{SplittingII} in the case of four intersection points ($q= -t^{1/2}$).}\label{Jones_Splitting_II}
\end{figure}

\begin{figure}
\begin{flushleft}
  \includegraphics[width=1.1\textwidth]{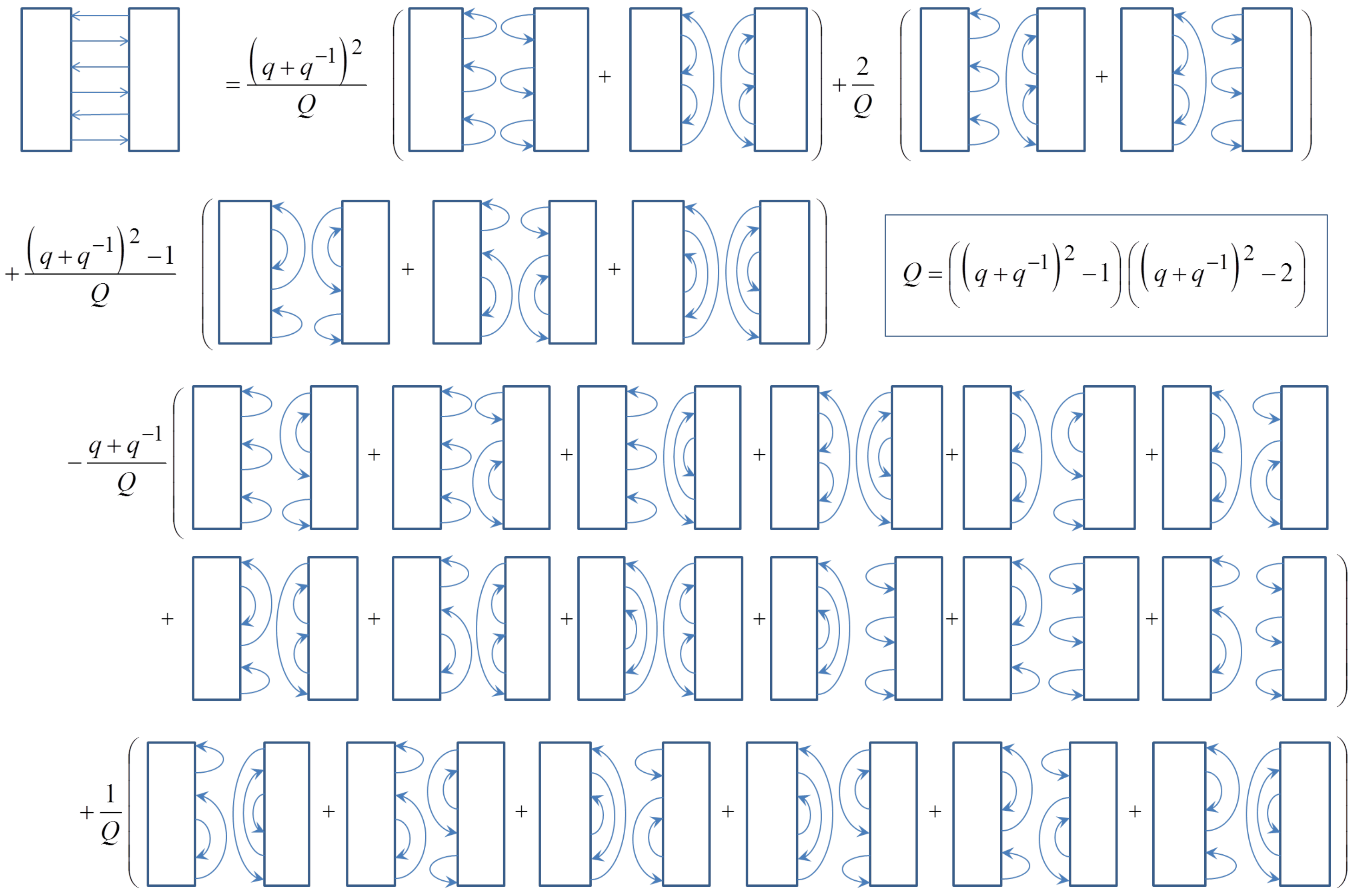}\\
  \end{flushleft}
  \caption{Schematic picture of the Jones Polynomial splitting formula in the case of six intersection points ($q= -t^{1/2}$).}\label{Jones_Splitting_III}
\end{figure}

Little knowledge is required to read this work and it is written for a broad audience. The categorified or Khovanov homology version of the result presented here, was written by the author in the unpublished paper \cite{BuIII}. However, the categorified version is in some sense a negative result for the calculation of the resulting spectral sequence requires the full knowledge of one of the complexes and not of the respective homology of it.

\section{Preliminaries: The Kauffman bracket and Jones polynomial}

The \textit{Kauffman bracket} of a Link diagram is a Laurent polynomial defined by the following rules:

\begin{enumerate}\label{Kauffman_Rules}
\item\label{Kauffman_Rule_One}
  $\left\langle\KPA\right\rangle=1$

\item\label{Kauffman_Rule_Two}
  $\left\langle L \sqcup \KPA\right\rangle=(-A^{2}-A^{-2})\langle L\rangle$

\item\label{Kauffman_Rule_Three}
  $\left\langle\KPB\right\rangle=
  A\left\langle\KPC\right\rangle + A^{-1} \left\langle \KPD \right\rangle$

\end{enumerate}
where the third rule denotes the surgery performed on a crossing and $\KPA$ is the unknot. To see that the bracket is well defined, it suffices to write it as a sum over all possible states. The set of crossings will be denoted by $\chi$ and a state is a vector with zero or one on its entries: $\alpha\in \{0,1\}^{\chi}$. Every state vector $\alpha$ defines a smoothing $\mathcal{S}_{\alpha}(L)$ of the link $L$ as follows: For each crossing $\KPB$, if $\alpha(\KPB)=0$, then perform the surgery with $\KPC$, otherwise perform the surgery with $\KPD$. The smoothing of a link is a disjoint union of circles. Denote by $k(\alpha)$ the number of these circles. The following is the sum state expression for the bracket (\cite{Kauffman}, Lemma 2.1):
\begin{equation}\label{State_expansion}
\left\langle L\right\rangle= \sum_{\alpha\in \{0,1\}^{\chi}}\ A^{|\alpha^{-1}(0)|-|\alpha^{-1}(1)|}(-A^{2}-A^{-2})^{k(\alpha)-1}
\end{equation}
where $|\cdot|$ denotes the cardinal of a set.

The second and third rule are defined in order to make the bracket invariant under the second and third Reidemeister moves (\cite{Kauffman}, Lemmas 2.2 and 2.3) while the first one is just a normalization. However, it is not invariant under the first Reidemeister move:

\begin{eqnarray*}
\left\langle\KRr\right\rangle &=& -A^{3}\left\langle
  \begin{tikzpicture}[baseline=-0.65ex,scale=0.5]
  \draw[color=gray,thick] (-\Radius, \Radius).. controls (0,-\Radius) ..(\Radius, \Radius);
  \end{tikzpicture}
\right\rangle \\
\left\langle\KRl\right\rangle &=& -A^{-3}\left\langle
  \begin{tikzpicture}[baseline=-0.65ex,scale=0.5]
  \draw[color=gray,thick] (-\Radius, \Radius).. controls (0,-\Radius) ..(\Radius, \Radius);
  \end{tikzpicture}
\right\rangle
\end{eqnarray*}

In order to solve this problem and have an ambient isotopic invariant, we must consider oriented link diagrams. For an oriented link diagram $L$, its \textit{writhe} $w(L)$ is defined as the sum of all of its sign crossings, see Figure \ref{Sign_Crossing}. See that in the case of a knot, its writhe does not depend on any particular orientation; i.e. The writhe is defined for unoriented knots. The \textit{Kauffman function} of an oriented link diagram is defined as follows:
\begin{equation}\label{Kauffman_function}
f_L(A):= (-A)^{-3w(L)}\left\langle L \right\rangle
\end{equation}
Now, in contrary to the bracket, the Kauffman function is an ambient isotopic invariant. The relation between the Kauffman function and the Jones polynomial\footnote{Actually, the Jones polynomial is a Laurent polynomial in $t^{1/4}$ and not a polynomial in $t$ in general.} is the following (\cite{Kauffman}, Lemma 2.6):
\begin{equation}\label{Jones_polynomial_def}
J(L)(t)=f_L(t^{-1/4})
\end{equation}
In what follows we will use the relation \eqref{Jones_polynomial_def} as a definition for the Jones polynomial.

\begin{figure}\label{Sign_Crossing}
\begin{center}
  \includegraphics[width=0.35\textwidth]{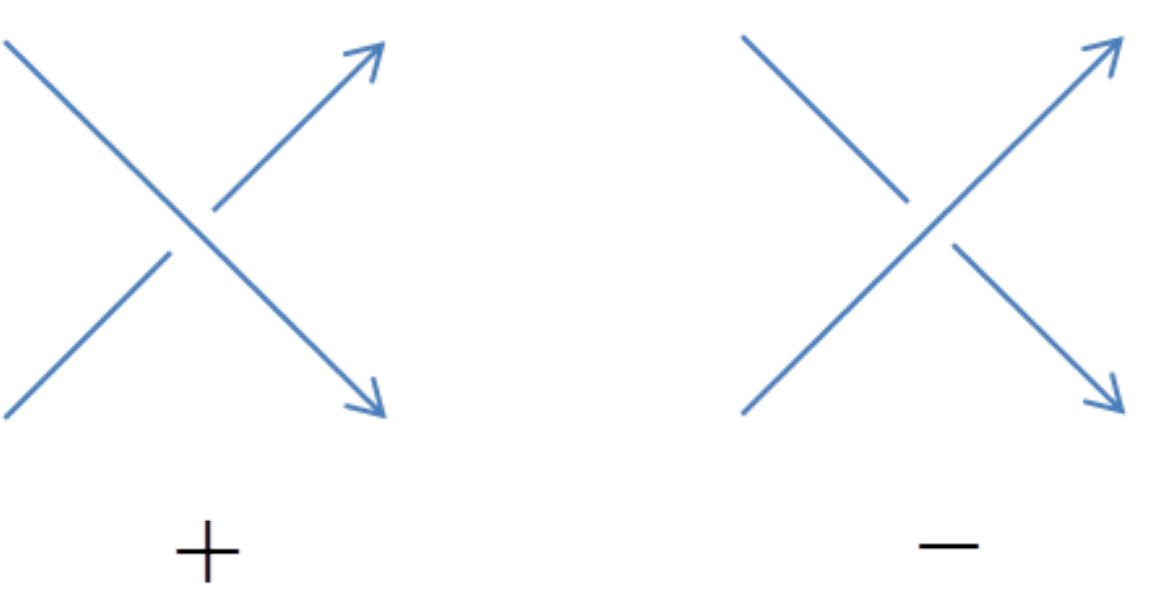}\\
  \end{center}
  \caption{Sign of a crossing.}\label{Sign_Crossing}
\end{figure}

\section{Alternate cuts and surgeries}\label{Surgeries}

Although this section is almost self-contained, we recommend the books \cite{Aigner} and \cite{Stanley}. Consider partitions $\mathcal{A}$ and $\mathcal{B}$ in $\Gamma_{n}$. We write $\mathcal{A}\prec\mathcal{B}$ if $\mathcal{A}$ is finer than $\mathcal{B}$; i.e. if for every $a\in\mathcal{A}$ there is $b\in\mathcal{B}$ such that $a\subset b$. We define the operations:

$$\mathcal{A}\vee \mathcal{B}= min\{\mathcal{C}\in \Gamma_{n}\ such\ that\ \mathcal{A}\prec \mathcal{C}\ and\ \mathcal{B}\prec \mathcal{C}\}$$
$$\mathcal{A}\wedge \mathcal{B}= max\{\mathcal{C}\in \Gamma_{n}\ such\ that\ \mathcal{A}\succ \mathcal{C}\ and\ \mathcal{B}\succ \mathcal{C}\}$$

The triple $(\Gamma_{n},\wedge, \vee)$ is a lattice with the following compatibility relations:
\begin{eqnarray*}
\mathcal{A}\wedge(\mathcal{B}\vee\mathcal{C})&=& (\mathcal{A}\wedge\mathcal{B})\vee(\mathcal{A}\wedge\mathcal{C}) \\
\mathcal{A}\vee(\mathcal{B}\wedge\mathcal{C})&=& (\mathcal{A}\vee\mathcal{B})\wedge(\mathcal{A}\vee\mathcal{C}) \\
\mathcal{A}\wedge full &=& full \\
\mathcal{A}\vee trivial &=& trivial
\end{eqnarray*}
for every triple of partitions $\mathcal{A}, \mathcal{B}$ and $\mathcal{C}$ where $trivial=\{\{1,2,\ldots n\}\}$ and $full= \{\{1\},\{2\},\ldots \{n\}\}$.

We define the notion of non crossing partitions as follows: Consider the k-th character $g_{k}:\R\rightarrow \C$ such that $g_{k}(t)=\exp(2\pi i t/k)$. For every partition $\mathcal{A}\in \Gamma_{n}$ such that $\mathcal{A}= \{m_{1},\ m_{2},\ldots m_{l}\}$ define:
$$Convex(\mathcal{A})=\{Convex\left(g_{n}(m_{1})\right),\ Convex\left(g_{n}(m_{2})\right),\ldots Convex\left(g_{n}(m_{l})\right)\}$$
where $Convex(S)$ denotes the convex hull of the set $S$ in $\C$. A partition $\mathcal{A}\in \Gamma_{n}$ will be called \textit{non crossing} \footnote{Non crossing partitions were introduced by Kreweras in \cite{Kreweras} and since then they have been widely used in different branches of mathematics \cite{mccammond}.} if for every pair of distinct convex sets in $Convex(\mathcal{A})$ their intersection is empty. The subset of non crossing partitions will be denoted by $NC_{n}$. The subset of non crossing partitions is closed under $\wedge$ but not under $\vee$; i.e. it is a sublattice of the semilattice $\Gamma_{n}^{\wedge}$ but not of $\Gamma_{n}^{\vee}$.

The permutation group $S_{n}$ acts on the set of $n$-partitions $\Gamma_{n}$ as follows: For every $n$-partition $\mathcal{A}= \{m_{1},\ m_{2},\ldots m_{l}\}$ we define:
$$\sigma\cdot\mathcal{A}:= \{\sigma(m_{1}),\ \sigma(m_{2}),\ldots \sigma(m_{l})\}$$
for every permutation $\sigma\in S_{n}$. See that the set of non crossing partitions $NC_{n}$ is closed under the action of the Dihedral group $D_{n}$ and this is the maximal permutation subgroup with this property.

\begin{defi}
An alternate cut of an oriented link diagram $L$ is a Jordan curve $C$ of the plane such that $C$ is transversal to the link diagram $L$ \footnote{In particular, the Jordan curve doesn't intersect any crossing of the link diagram $L$.} and, giving $C$ some orientation and walking the curve $C$ along this orientation, the intersection points orientation alternate.
\end{defi}

Consider an oriented alternate cut $C$ with a marked point $c\in C$. Because of degree theory, there must be $2n$ intersection point with $L$, half of them positively oriented and the other half negatively oriented. Denote these points by:
$$a_{1},b_{1},a_{2},b_{2},\ldots a_{n},b_{n}$$
as we go through the curve $C$ along its orientation starting in the marking $c$. We choose the marking in such a way that $a_{1}$ is positively oriented\footnote{This is achieved just moving the marking along the alternate cut.}. We will say that $\mathcal{A}$ is a non crossing partition of the intersection if $\mathcal{A}\in NC_n$. We will say that the cut is non trivial if $n>0$.

By considering the one point compactification of the plane, we may suppose that $C$ is an equator of the unit sphere in $\R^{3}$ and the intersection points:
$$a_{1},b_{1},\ldots a_{n},b_{n}$$
coincide with the points in the unit circle of the equator plane:
$$g_{2n}(1),g_{2n}(2),\ldots g_{2n}(2n-1),g_{2n}(2n)=1$$
respectively. Consider the projections of the respective hemispheres into the equator plane $\pi_N:H_1 \xrightarrow{\cong}\Delta$ and $\pi_S:H_2 \xrightarrow{\cong}\Delta$ where $\Delta$ is the closed unit disk.

We define the surgeries as follows: Given a non crossing partition $\mathcal{A}$ such that $\mathcal{A}= \{m_{1},m_{2},\ldots m_{l}\}$ and $i_{l,1}< i_{l,2}<\ldots i_{l,k_{l}}$ are all the elements in the class $m_{l}$, we define:
\begin{eqnarray*}
\vec{\mathcal{A}}:= \bigsqcup_{l} \overrightarrow{[g_{2n}(2i_{l,1}),g_{2n}(2i_{l,2}-1)]}\sqcup \overrightarrow{[g_{2n}(2i_{l,2}),g_{2n}(2i_{l,3}-1)]}\sqcup\ldots\\
\ldots\overrightarrow{[g_{2n}(2i_{l,k_{l}}),g_{2n}(2i_{l,1}-1)]}
\end{eqnarray*}
where $\overrightarrow{[b,a]}$ denotes the oriented line segment from the point $b$ to the point $a$ such that $a,b\in\R^{2}$. See that $\vec{\mathcal{A}}$ is a disjoint union of oriented segments starting at some $b_{i}$ and ending at some $a_{j}$. We define the oriented link diagram surgeries:
\begin{eqnarray}\label{surgeriesDef}
L_{1}^{\mathcal{A}} &:=& (L\cap H_1)\cup \pi_{S}^{-1}\left(\vec{\mathcal{A}}\right) \\
L_{2}^{\mathcal{A}} &:=& (L\cap H_2)\cup \pi_{N}^{-1}\left(\vec{\mathcal{A}}^{op}\right)
\end{eqnarray}
where $\vec{\mathcal{A}}^{op}$ is the diagram $\vec{\mathcal{A}}$ reversing all the vectors. For later purposes, we define the following non crossing oriented link: Consider non crossing partitions $\mathcal{A}$ and $\mathcal{B}$, then:

\begin{equation}\label{NoCrossingLink}
\vec{\mathcal{A}}*\vec{\mathcal{B}}^{op}:= \pi_N^{-1}\left(\vec{\mathcal{A}}\right)\cup\pi_S^{-1}\left(\vec{\mathcal{B}}^{op}\right)
\end{equation}
See Figure \ref{Example_Lemma} for an example.

\begin{figure}
\begin{center}
  \includegraphics[width=0.9\textwidth]{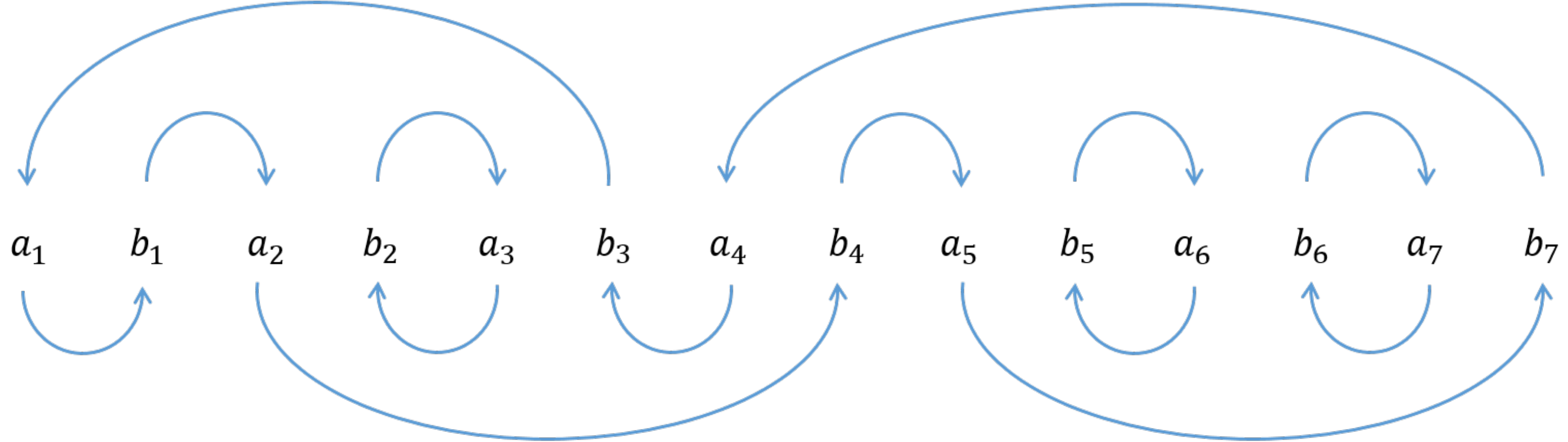}\\
  \end{center}
  \caption{Schematic picture of the non crossing oriented link $\vec{\mathcal{A}}*\vec{\mathcal{B}}^{op}$ with $\mathcal{A}= \{\{1,2,3\},\{4,5,6,7\}\}$ and $\mathcal{B}= \{\{1\},\{2,3,4\},\{5,6,7\}\}$. $\vec{\mathcal{A}}$ is shown on top and $\vec{\mathcal{B}}^{op}$ on bottom of the figure.}\label{Example_Lemma}
\end{figure}

Because the set of non crossing partitions is closed under the Dihedral group action, the set of these surgeries is independent of the orientation and marking of the Jordan curve $C$ previously chosen; i.e. Modulo ambient isotopy, the collection of surgeries only depends on the alternate cut as it was defined.

Because our construction neither modify the existing crossing nor adds new ones, we have that the number of positive/negative crossings $l_{1}^{\pm}$ is the same for all the surgeries $\{L_{1}^{\mathcal{A}}\}$ and a similar result holds for $\{L_{2}^{\mathcal{A}}\}$. Moreover,

\begin{equation}\label{relationSigns}
l^{\pm}= l_{1}^{\pm}+ l_{2}^{\pm}
\end{equation}
where $l^{\pm}$ denote the number of positive/negative crossings of the oriented link $L$. In other words, the number of crossings as well as the writhe are additive respect to the cut and this is independent of the surgery:
\begin{equation}\label{writhe}
w(L)= w\left(L_{1}^{\mathcal{A}}\right) + w\left(L_{2}^{\mathcal{B}}\right)
\end{equation}
for every pair of non crossing partitions $\mathcal{A}$ and $\mathcal{B}$.

The surgeries $L_{i}^{\mathcal{A}}$ have the common set of crossings $\chi_{i}:= \chi\cap H_{i}$ and the set of crossings $\chi$ of the oriented link $L$ is the disjoint union:
$$\chi=\chi_{1}\sqcup\chi_{2}$$
This way, the set of states decomposes as:
$$\{0,1\}^{\chi}= \{0,1\}^{\chi_{1}}\times \{0,1\}^{\chi_{2}}$$
In other words, every state $\alpha\in \{0,1\}^{\chi}$ gives a pair of states $\alpha_{i}\in \{0,1\}^{\chi_{i}}$ for $i=1,2$ such that $\alpha= (\alpha_{1},\alpha_{2})$.

\section{Negami's like splitting of the Jones polynomial}

From now on, we will denote simply by $L_{1}$ and $L_{2}$ the respective $full$-surgeries $L_{1}^{full}$ and $L_{2}^{full}$ where $full$ denotes the partition $\{\{1\},\{2\},\ldots \{n\}\}$. For $i=1,2$, we define the following maps:
$$\mathcal{C}_{i}:\{0,1\}^{\chi_{i}}\rightarrow NC_{n}$$
such that:
\begin{itemize}
\item If the set of crossings $\chi_{i}$ is empty, define the equivalence relation $\sim_{\emptyset}$ such that $i\sim_{\emptyset} j$ if $a_{i}$ and $a_{j}$ belong to the same connected component of the $full$-surgery $L_{i}$. We define:
$$\mathcal{C}_{i}(*)= \{1,2,\ldots n\}/\sim_{\emptyset}$$
where $*$ is the only state of $\{0,1\}^{\emptyset}$.

\item If the set of crossings $\chi_{i}$ is nonempty, for every state $\beta\in \{0,1\}^{\chi_{i}}$ define the equivalence relation $\sim_{\beta}$ such that $i\sim_{\beta} j$ if $a_{i}$ and $a_{j}$ belong to the same connected component of the smoothing $\mathcal{S}_{\beta}(L_{i})$. We define:
$$\mathcal{C}_{i}(\beta)= \{1,2,\ldots n\}/\sim_{\beta}$$
\end{itemize}
Because different connected components do not intersect, the obtained partition must be non crossing.

Given an alternate cut of a link diagram $L$, we say that a circle in the smoothing $\mathcal{S}_{\alpha}(L)$ is \textit{inner} if it doesn't contain any intersection point with the cut; i.e. it doesn't contain any of the points $a_{i},\ b_{j}$. Otherwise, the circle will be called \textit{outer}. The set of these circles will be denoted by $Inner_{\alpha}(L)$ and $Outer_{\alpha}(L)$ respectively and their disjoint union is the smoothing $\mathcal{S}_{\alpha}(L)$. We will say that an outer circle in a smoothing is a \textit{loop} if it contains two intersection points only\footnote{The transversality of the cut implies that there can only be an even number of intersection points on every circle.}.

\begin{lema}\label{Identity_Lemma}
Consider an alternate cut $C$ and a state $\alpha$ of a link $L$. Then:
$$|Outer_{\alpha}(L)|=n-|\mathcal{A}\wedge\mathcal{B}|+|\mathcal{A}\vee\mathcal{B}|$$
where $\mathcal{A}:= \mathcal{C}_{1}(\alpha_1)$ and $\mathcal{B}:= \mathcal{C}_{2}(\alpha_2)$.
\end{lema}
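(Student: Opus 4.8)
The plan is to strip the statement down to a purely combinatorial count about the two arc systems left on the cut, and then to feed that count into an Euler-characteristic (genus) relation for a pair of permutations. First I would note that neither $|Outer_{\alpha}(L)|$ nor $\mathcal{A},\mathcal{B}$ see the crossings directly: the smoothing $\mathcal{S}_{\alpha}(L)$ is crossingless, and intersecting it with each hemisphere and discarding the inner circles (which meet no point of $C$) leaves a non-crossing pairing of the $2n$ points $a_{1},b_{1},\dots,a_{n},b_{n}$, namely $M_{1}$ in $H_{1}$ and $M_{2}$ in $H_{2}$. By the gluing \eqref{NoCrossingLink} along $C$, each outer circle is an alternating concatenation of $M_{1}$- and $M_{2}$-arcs, so $|Outer_{\alpha}(L)|$ is the number of cycles of the $2$-regular graph $M_{1}\cup M_{2}$. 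Writing $N:=\vec{full}$ for the full-surgery pairing of \eqref{surgeriesDef} (which joins the two feet $a_{i},b_{i}$ of each band), the definition of $\mathcal{C}_{1},\mathcal{C}_{2}$ says precisely that the blocks of $\mathcal{A}$ (resp.\ $\mathcal{B}$) are the traces on $\{a_{1},\dots,a_{n}\}$ of the cycles of $M_{1}\cup N$ (resp.\ $M_{2}\cup N$).

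Since the three matchings $M_{1},M_{2},N$ all interchange the $a$-feet with the $b$-feet, I would encode this as permutations of the index set $\{1,\dots,n\}$: let $\sigma$ be induced by $M_{1}\cup N$ and $\tau$ by $M_{2}\cup N$, so that $|\mathcal{A}|=c(\sigma)$ and $|\mathcal{B}|=c(\tau)$, where $c(\cdot)$ counts cycles. A direct check on the $a$-feet shows the outer circles are the cycles of $\sigma\tau^{-1}$, whence $|Outer_{\alpha}(L)|=c(\sigma\tau^{-1})$, while $|\mathcal{A}\vee\mathcal{B}|$ equals the number of orbits of $\langle\sigma,\tau\rangle$ (a join is the transitive closure of connectivity through $M_{1}\cup N$ and $M_{2}\cup N$, i.e.\ through $M_{1}\cup M_{2}\cup N$). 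Because $M_{1}$ and $M_{2}$ are non-crossing and are glued across the two hemispheres of a sphere, the triple $(\sigma,\tau,\sigma\tau^{-1})$ is \emph{planar} (genus zero), so the Riemann--Hurwitz/Euler relation collapses to the equality
$$c(\sigma)+c(\tau)+c(\sigma\tau^{-1})=n+2\,|\mathcal{A}\vee\mathcal{B}|.$$
This isolates $|Outer_{\alpha}(L)|=n+2\,|\mathcal{A}\vee\mathcal{B}|-|\mathcal{A}|-|\mathcal{B}|$, which is a solid intermediate expressing the outer count through $|\mathcal{A}|,|\mathcal{B}|$ and the join.

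The last and decisive step is to convert this into the stated $n-|\mathcal{A}\wedge\mathcal{B}|+|\mathcal{A}\vee\mathcal{B}|$, which amounts exactly to replacing $|\mathcal{A}|+|\mathcal{B}|$ by $|\mathcal{A}\wedge\mathcal{B}|+|\mathcal{A}\vee\mathcal{B}|$. I expect this to be the main obstacle, and it is a genuinely delicate point rather than a formal manipulation: the modularity identity $|\mathcal{A}|+|\mathcal{B}|=|\mathcal{A}\wedge\mathcal{B}|+|\mathcal{A}\vee\mathcal{B}|$ is \emph{false} for arbitrary partitions, so the argument must extract the meet from the geometry and not from the abstract lattice. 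Concretely, I would try to realize $\mathcal{A}\wedge\mathcal{B}$ as the common refinement that the two planar arc systems force on the $a$-feet, using the non-crossing property of $M_{1},M_{2}$ together with the alternation of the $a_{i},b_{i}$ around $C$ to rule out the interlocking configurations that break modularity. If a clean lattice-theoretic identification resists, the fallback is induction on $n$: splice away one adjacent pair $a_{i},b_{i}$, verify that $|Outer_{\alpha}(L)|$, $|\mathcal{A}\wedge\mathcal{B}|$, $|\mathcal{A}\vee\mathcal{B}|$ and $n$ all change compatibly, and bottom out at the connected-sum normalization $n=1$, where $|Outer_{\alpha}(L)|=1=1-1+1$.
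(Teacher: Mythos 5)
Your opening reductions are sound: the outer circles are exactly the cycles of the union of the two boundary matchings $M_{1}=\vec{\mathcal{A}}$ and $M_{2}=\vec{\mathcal{B}}$ (the parity of the alternating arrangement $a_{1},b_{1},\dots,a_{n},b_{n}$ does force every arc of a non-crossing matching to join an $a$-foot to a $b$-foot, though you should say so), and your genus-zero Euler relation yields the correct intermediate identity $|Outer_{\alpha}(L)|=n+2|\mathcal{A}\vee\mathcal{B}|-|\mathcal{A}|-|\mathcal{B}|$. The step you flagged as ``the main obstacle'' is, however, not fillable: neither the non-crossing hypothesis nor an induction on $n$ can restore the modularity identity $|\mathcal{A}|+|\mathcal{B}|=|\mathcal{A}\wedge\mathcal{B}|+|\mathcal{A}\vee\mathcal{B}|$, because it genuinely fails for realizable pairs. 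Take $n=4$, $\mathcal{A}=\{\{1,2\},\{3,4\}\}$ and $\mathcal{B}=\{\{1,4\},\{2,3\}\}$, both in $NC_{4}$ and both realized by the crossingless link $\vec{\mathcal{A}}*\vec{\mathcal{B}}^{op}$ itself with the equator as alternate cut. Here $|\mathcal{A}|+|\mathcal{B}|=4$ while $|\mathcal{A}\wedge\mathcal{B}|+|\mathcal{A}\vee\mathcal{B}|=4+1=5$. Tracing the arcs directly, $\vec{\mathcal{A}}*\vec{\mathcal{B}}^{op}$ decomposes into the two circles $\{a_{1},b_{2},a_{3},b_{4}\}$ and $\{b_{1},a_{2},b_{3},a_{4}\}$: your formula gives $4+2-2-2=2$, the stated formula gives $4-4+1=1$. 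So the identity you would need to finish is false, and with it the statement you are trying to prove.

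For comparison, the paper's own argument proceeds quite differently: it identifies $Outer_{\alpha}(L)$ with $\vec{\mathcal{A}}*\vec{\mathcal{B}}^{op}$, counts the loops by the inclusion--exclusion identity $|\vec{\mathcal{A}}\cap\vec{\mathcal{B}}|=n-|\mathcal{A}\wedge\mathcal{B}|$, asserts that when $|\mathcal{A}\vee\mathcal{B}|=1$ all non-loop points lie on a \emph{single} remaining circle, and then sums over the blocks of $\mathcal{A}\vee\mathcal{B}$. The example above breaks exactly that assertion: $|\mathcal{A}\vee\mathcal{B}|=1$ and $\vec{\mathcal{A}}\cap\vec{\mathcal{B}}=\emptyset$, yet there are two circles. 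The underlying reason is the one your setup makes visible: the join $\mathcal{A}\vee\mathcal{B}$ records connectivity through $M_{1}\cup M_{2}\cup N$ (with the full-surgery arcs $N$ joining $b_{i}$ to $a_{i}$ present), and deleting $N$ may disconnect the diagram. Your Euler-characteristic route is therefore the more reliable one; it agrees with the stated formula precisely on modular pairs, which covers every pair for $n\le 3$ and explains why the paper's displayed examples all check out. Rather than trying to close the gap, you should report that the correct count is $n+2|\mathcal{A}\vee\mathcal{B}|-|\mathcal{A}|-|\mathcal{B}|$ and trace how this correction propagates to the matrix $\left(d_{\mathcal{A}\mathcal{B}}\right)$ in the splitting formula.
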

\begin{proof}
Following the definitions, it is clear that the set of circles $Outer_{\alpha}(L)$ is diffeomorphic to $\vec{\mathcal{A}}*\vec{\mathcal{B}}^{op}$. Abusing of notation, denote by $\vec{\mathcal{A}}\cap\vec{\mathcal{B}}$ the set of common vectors $\overrightarrow{[b_i,a_j]}$ in $\vec{\mathcal{A}}$ and $\vec{\mathcal{B}}$. There is a one to one canonical correspondence between $\vec{\mathcal{A}}\cap\vec{\mathcal{B}}$ and the set of loops in $\vec{\mathcal{A}}*\vec{\mathcal{B}}^{op}$.

Consider first the case of $|\mathcal{A}\vee\mathcal{B}|=1$. Then, $\vec{\mathcal{A}}*\vec{\mathcal{B}}^{op}$ consists of $|\vec{\mathcal{A}}\cap\vec{\mathcal{B}}|$ loops and one more circle containing all the remaining intersection points:
$$|\vec{\mathcal{A}}*\vec{\mathcal{B}}^{op}|=|\vec{\mathcal{A}}\cap\vec{\mathcal{B}}|+1$$
An easy application inclusion-exclusion principle leads to $|\vec{\mathcal{A}}\cap\vec{\mathcal{B}}|= n-|\mathcal{A}\wedge\mathcal{B}|$ and we have the result for the particular case (Recall the example in Figure \ref{Example_Lemma}).

In the general case, consider $\mathcal{A}\vee\mathcal{B}= \{m_{1},m_{2},\ldots m_{l}\}$ and denote by $\mathcal{A}_i$ the partition resulting from $\mathcal{A}$ by removing every element not contained in $m_i$. Denote by $n_i$ the number of elements in the class $m_i$. Then, applying the particular case to every class $m_i$ we have the result:
\begin{eqnarray*}
|\vec{\mathcal{A}}*\vec{\mathcal{B}}^{op}|&=&\sum_{i=1}^{l} |\vec{\mathcal{A}}_{i}*\vec{\mathcal{B}}_{i}^{op}| \\
&=& \sum_{i=1}^{l} \left( n_i-|\mathcal{A}_{i}\wedge\mathcal{B}_{i}|+1\right) \\
&=& \left(\sum_{i=1}^{l} n_i\right)-\left(\sum_{i=1}^{l}|\mathcal{A}_{i}\wedge\mathcal{B}_{i}|\right)+l \\
&=& n-|\mathcal{A}\wedge\mathcal{B}|+|\mathcal{A}\vee\mathcal{B}|
\end{eqnarray*}
\end{proof}

\begin{defi}
Consider an alternate cut $C$ of a link $L$ and a non crossing partition $\mathcal{A}$ of the intersection. Define:
$$\left\langle L_i\right\rangle_{\mathcal{A}}:= \sum_{\substack{ \alpha\in \{0,1\}^{\chi_{i}} \\ \mathcal{C}_i(\alpha)=\mathcal{A}}}\ A^{|\alpha^{-1}(0)|-|\alpha^{-1}(1)|}(-A^{2}-A^{-2})^{k(\alpha)-|\mathcal{A}|}$$
where $\chi_{i}$ is the set of crossings of $L_i$ (Recall that $L_i$ is the full surgery $L_i:= L_{i}^{full}$).
\end{defi}

\begin{lema}\label{Identity_One}
Consider a non trivial alternate cut $C$ of a link $L$ with $2n$ intersection points. Then:
$$\left\langle L\right\rangle = \sum_{\mathcal{A}, \mathcal{B}\in NC_n}\ a_{\mathcal{A} \mathcal{B}}\left\langle L_1\right\rangle_{\mathcal{A}}\left\langle L_2\right\rangle_{\mathcal{B}}$$
where the matrix $M:= \left(a_{\mathcal{A} \mathcal{B}}\right)$ is symmetric with Laurent polynomial entries is defined as follows:
$$a_{\mathcal{A} \mathcal{B}}= (-A^{2}-A^{-2})^{n-|\mathcal{A}\wedge\mathcal{B}|+|\mathcal{A}\vee\mathcal{B}|-1}$$
\end{lema}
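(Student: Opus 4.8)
The plan is to start from the state sum expansion \eqref{State_expansion} for $\langle L\rangle$ and reorganize it along the factorization $\{0,1\}^{\chi}=\{0,1\}^{\chi_1}\times\{0,1\}^{\chi_2}$ of the set of states. Writing $\alpha=(\alpha_1,\alpha_2)$, the $A$-exponent splits additively as $|\alpha^{-1}(0)|-|\alpha^{-1}(1)|=\bigl(|\alpha_1^{-1}(0)|-|\alpha_1^{-1}(1)|\bigr)+\bigl(|\alpha_2^{-1}(0)|-|\alpha_2^{-1}(1)|\bigr)$, so the real work is to show that the circle-counting exponent $k(\alpha)-1$ splits correctly once the states are grouped by the pair of non crossing partitions $\mathcal{A}:=\mathcal{C}_1(\alpha_1)$ and $\mathcal{B}:=\mathcal{C}_2(\alpha_2)$. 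The whole lemma will then reduce to bookkeeping of circles together with the already established Lemma \ref{Identity_Lemma}.

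The heart of the argument is therefore a decomposition of $k(\alpha)=|Outer_\alpha(L)|+|Inner_\alpha(L)|$. I would first observe that every inner circle of $\mathcal{S}_\alpha(L)$ is confined to a single hemisphere by the Jordan curve $C$, so that $|Inner_\alpha(L)|=|Inner_{\alpha_1}(L_1)|+|Inner_{\alpha_2}(L_2)|$; the point is that the full surgery closes the tangle $L\cap H_i$ with arcs that meet the cut only at the prescribed endpoints $a_j,b_j$, so these closure arcs neither create nor destroy loops confined to the interior of a hemisphere, and hence the inner circles of $\mathcal{S}_{\alpha_i}(L_i)$ are exactly the closed loops produced inside $H_i$ by the smoothing $\alpha_i$. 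Next I would establish that $|Outer_{\alpha_i}(L_i)|=|\mathcal{C}_i(\alpha_i)|$: in the full closure each arc joins some $b_j$ to the corresponding $a_j$, so every outer circle of $L_i$ uses at least one closure arc and thus carries at least one $a$-point, and by the very definition of $\mathcal{C}_i$ the $a$-points lying on a single outer circle form exactly one class of the partition. Writing $k(\alpha_i)$ for the number of circles of $\mathcal{S}_{\alpha_i}(L_i)$, this gives $|Inner_{\alpha_i}(L_i)|=k(\alpha_i)-|\mathcal{C}_i(\alpha_i)|$, which is precisely the exponent of $(-A^2-A^{-2})$ appearing in the summands of $\langle L_i\rangle_{\mathcal{A}}$.

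Combining these identities with Lemma \ref{Identity_Lemma}, which evaluates $|Outer_\alpha(L)|=n-|\mathcal{A}\wedge\mathcal{B}|+|\mathcal{A}\vee\mathcal{B}|$, I obtain
$$k(\alpha)-1=\bigl(k(\alpha_1)-|\mathcal{A}|\bigr)+\bigl(k(\alpha_2)-|\mathcal{B}|\bigr)+\bigl(n-|\mathcal{A}\wedge\mathcal{B}|+|\mathcal{A}\vee\mathcal{B}|-1\bigr).$$
Thus the exponent of $(-A^2-A^{-2})$ in the summand of $\langle L\rangle$ factors simultaneously with the $A$-exponent into the exponent of $\langle L_1\rangle_{\mathcal{A}}$, the exponent of $\langle L_2\rangle_{\mathcal{B}}$, and the exponent of $a_{\mathcal{A}\mathcal{B}}$. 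Grouping the state sum first by $(\mathcal{A},\mathcal{B})$ and then summing over all $\alpha_1$ with $\mathcal{C}_1(\alpha_1)=\mathcal{A}$ and all $\alpha_2$ with $\mathcal{C}_2(\alpha_2)=\mathcal{B}$, the inner double sum factors as a product whose two factors are, by definition, $\langle L_1\rangle_{\mathcal{A}}$ and $\langle L_2\rangle_{\mathcal{B}}$, yielding the claimed formula. The symmetry of $M$ is immediate from the symmetry of $\wedge$ and $\vee$, and the entries are genuine Laurent polynomials since $n-|\mathcal{A}\wedge\mathcal{B}|+|\mathcal{A}\vee\mathcal{B}|-1\ge 0$.

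I expect the main obstacle to be the two circle-counting claims of the second paragraph: that the full closure leaves the inner circles of each hemisphere untouched, and that the number of outer circles of each full surgery equals the cardinality of its associated partition. Both are visually evident from a picture such as Figure \ref{Example_Lemma}, but making them rigorous requires a careful argument that the closure arcs, meeting the cut only at their endpoints, neither merge nor split the loops living strictly inside a hemisphere; once this is pinned down the remainder is purely formal exponent arithmetic together with the reindexing of the sum.
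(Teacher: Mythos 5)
Your proposal is correct and follows essentially the same route as the paper: decompose $\mathcal{S}_{\alpha}(L)$ into the inner circles of the two hemispheres plus the outer circles, identify $|Outer_{\alpha_i}(L_i)|=|\mathcal{C}_i(\alpha_i)|$ and invoke Lemma \ref{Identity_Lemma} for $|Outer_{\alpha}(L)|$, then regroup the state sum by the pair $(\mathcal{A},\mathcal{B})$ so that it factors. The only difference is that you spell out more carefully the circle-counting claims that the paper states as immediate from the definitions.
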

\begin{proof}
For every state $\alpha=(\alpha_1, \alpha_2)$, we have the decomposition:
$$\mathcal{S}_{\alpha}(L)=Inner_{\alpha}(L)\sqcup Outer_{\alpha}(L)= Inner_{\alpha_1}(L_1)\sqcup Inner_{\alpha_2}(L_2)\sqcup Outer_{\alpha}(L)$$
Denote by $k(\alpha_i)$ the number of circles in the smoothing $\mathcal{S}_{\alpha_i}(L_i)$. By definition, the number of outer circles in the smoothing $\mathcal{S}_{\alpha_i}(L_i)$ is $|\mathcal{C}_{i}(\alpha_i)|$ and by Lemma \ref{Identity_Lemma} we have:

$$k(\alpha)= \underbrace{k(\alpha_{1})-|\mathcal{A}|}_\text{1.} + \underbrace{k(\alpha_{2})- |\mathcal{B}|}_\text{2.}+ \underbrace{n+|\mathcal{A}\vee\mathcal{B}|-|\mathcal{A}\wedge\mathcal{B}|}_\text{3.}$$
such that:
\begin{enumerate}
\item = Number of inner circles in the smoothing $\mathcal{S}_{\alpha_{1}}(L_{1})$. \\
\item = Number of inner circles in the smoothing $\mathcal{S}_{\alpha_{2}}(L_{2})$. \\
\item = Number of outer circles in the smoothing $\mathcal{S}_{\alpha}(L)$.
\end{enumerate}
where $\mathcal{A}:= \mathcal{C}_{1}(\alpha_1)$ and $\mathcal{B}:= \mathcal{C}_{2}(\alpha_2)$. Then, substituting in the sum state expression \eqref{State_expansion}:

\begin{eqnarray*}
\left\langle L\right\rangle &=& \sum_{\alpha\in \{0,1\}^{\chi}}\ A^{|\alpha^{-1}(0)|-|\alpha^{-1}(1)|}(-A^{2}-A^{-2})^{k(\alpha)-1} \\
&=&
\sum_{\mathcal{A}, \mathcal{B}\in NC_n}
\sum_{\substack{ \alpha_1\in \{0,1\}^{\chi_{1}} \\ \mathcal{C}_1(\alpha_1)=\mathcal{A}}}
\sum_{\substack{ \alpha_2\in \{0,1\}^{\chi_{2}} \\ \mathcal{C}_2(\alpha_2)=\mathcal{B}}}
A^{|\alpha_{1}^{-1}(0)|-|\alpha_{1}^{-1}(1)|}(-A^{2}-A^{-2})^{k(\alpha_{1})-|\mathcal{A}|}\ldots \\
&& A^{|\alpha_{2}^{-1}(0)|-|\alpha_{2}^{-1}(1)|}(-A^{2}-A^{-2})^{k(\alpha_{2})-|\mathcal{B}|}(-A^{2}-A^{-2})^{n+|\mathcal{A}\vee\mathcal{B}|-|\mathcal{A}\wedge\mathcal{B}|-1} \\
&=& \sum_{\mathcal{A}, \mathcal{B}\in NC_n} (-A^{2}-A^{-2})^{n+|\mathcal{A}\vee\mathcal{B}|-|\mathcal{A}\wedge\mathcal{B}|-1}\ldots \\
&& \left(\sum_{\substack{ \alpha_1\in \{0,1\}^{\chi_{1}} \\ \mathcal{C}_1(\alpha_1)=\mathcal{A}}}
A^{|\alpha_{1}^{-1}(0)|-|\alpha_{1}^{-1}(1)|}(-A^{2}-A^{-2})^{k(\alpha_{1})-|\mathcal{A}|}\right)\ldots \\
&& \left(\sum_{\substack{ \alpha_2\in \{0,1\}^{\chi_{2}} \\ \mathcal{C}_2(\alpha_2)=\mathcal{B}}}
A^{|\alpha_{2}^{-1}(0)|-|\alpha_{2}^{-1}(1)|}(-A^{2}-A^{-2})^{k(\alpha_{2})-|\mathcal{B}|}\right) \\
&=& \sum_{\mathcal{A}, \mathcal{B}\in NC_n}\ a_{\mathcal{A} \mathcal{B}}\left\langle L_1\right\rangle_{\mathcal{A}}\left\langle L_2\right\rangle_{\mathcal{B}}
\end{eqnarray*}
and the Lemma is proved.
\end{proof}

\begin{cor}\label{Identity_Two}
Under the hypothesis of Lemma \ref{Identity_One}, we have the following relation:
$$\left\langle L_{i}^{\mathcal{A}}\right\rangle = \sum_{\mathcal{B}\in NC_n}\ a_{\mathcal{A} \mathcal{B}}\left\langle L_i\right\rangle_{\mathcal{B}}$$
\end{cor}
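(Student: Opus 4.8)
The plan is to apply Lemma~\ref{Identity_One} not to $L$ but to the surgered diagram $L_i^{\mathcal{A}}$ itself, viewed as a link that carries the very same alternate cut $C$. The cut meets $L_i^{\mathcal{A}}$ in the same $2n$ points with the same alternating orientations, and it splits $L_i^{\mathcal{A}}$ into the tangle $L\cap H_i$ on one hemisphere and the crossing-free closure $\pi^{-1}(\vec{\mathcal{A}})$ on the other. Since the tangle on the $H_i$ side is literally the one coming from $L$, its full surgery is again $L_i$, and the weighted sums attached to that side coincide with $\langle L_i\rangle_{\mathcal{P}}$ for every $\mathcal{P}\in NC_n$. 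Lemma~\ref{Identity_One} then expresses $\langle L_i^{\mathcal{A}}\rangle$ as a double sum over $\mathcal{P},\mathcal{Q}\in NC_n$ of $a_{\mathcal{P}\mathcal{Q}}\,\langle L_i\rangle_{\mathcal{P}}$ times the weighted sum $\langle\,\cdot\,\rangle_{\mathcal{Q}}$ of the crossing-free side.

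The first thing I would compute is this crossing-free factor. Because $\pi^{-1}(\vec{\mathcal{A}})$ carries no crossings, its state space is the single state $*$, so the sum defining $\langle\,\cdot\,\rangle_{\mathcal{Q}}$ has at most one nonzero term, supported on $\mathcal{Q}=\mathcal{C}(*)$. I would then pin down the two numbers entering that term: the partition $\mathcal{C}(*)$ and the circle count $k(*)$ of the full surgery of $\vec{\mathcal{A}}$. Both are governed by the defining property of $\vec{\mathcal{A}}$, namely that closing its arcs with the identity arcs $\vec{full}$ yields exactly one circle per block of $\mathcal{A}$, every such circle being outer; hence $\mathcal{C}(*)=\mathcal{A}$ and $k(*)=|\mathcal{A}|$, the latter also being what Lemma~\ref{Identity_Lemma} gives through $n-|\mathcal{A}\wedge full|+|\mathcal{A}\vee full|=|\mathcal{A}|$. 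Consequently the exponent $k(*)-|\mathcal{A}|$ vanishes and the crossing-free factor equals $1$ when $\mathcal{Q}=\mathcal{A}$ and $0$ otherwise.

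With this the double sum collapses to the diagonal $\mathcal{Q}=\mathcal{A}$, leaving $\langle L_i^{\mathcal{A}}\rangle=\sum_{\mathcal{P}\in NC_n} a_{\mathcal{P}\mathcal{A}}\,\langle L_i\rangle_{\mathcal{P}}$; the symmetry $a_{\mathcal{P}\mathcal{A}}=a_{\mathcal{A}\mathcal{P}}$ recorded in Lemma~\ref{Identity_One} and the relabelling $\mathcal{P}\rightsquigarrow\mathcal{B}$ then produce the stated identity. The step I expect to be the main obstacle is precisely the clean evaluation of the crossing-free side: one must check that its unique smoothing realizes the partition $\mathcal{A}$ with exactly $|\mathcal{A}|$ circles, being careful about the orientation conventions ($\vec{\mathcal{A}}$ versus $\vec{\mathcal{A}}^{op}$, and whether the closure sits in $H_1$ or $H_2$ according to $i$) and about the degenerate case in which $\chi_i$ is empty. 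Neither issue affects the outcome, since connectivity is insensitive to orientation reversal and the matrix $(a_{\mathcal{A}\mathcal{B}})$ is symmetric. As an independent check I would also carry out the direct state-sum argument: for each state $\beta$ the circles of $\mathcal{S}_\beta(L_i^{\mathcal{A}})$ split into the $k(\beta)-|\mathcal{B}|$ inner circles of $\mathcal{S}_\beta(L_i)$, which do not see the closure, together with exactly $n-|\mathcal{A}\wedge\mathcal{B}|+|\mathcal{A}\vee\mathcal{B}|$ outer circles supplied by Lemma~\ref{Identity_Lemma} applied to $L_i^{\mathcal{A}}$, and regrouping the resulting state sum by $\mathcal{B}=\mathcal{C}_i(\beta)$ reconstitutes the coefficient $a_{\mathcal{A}\mathcal{B}}$.
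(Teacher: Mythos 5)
Your proposal is correct and follows essentially the same route as the paper: apply Lemma \ref{Identity_One} to $L_i^{\mathcal{A}}$ with the same cut, observe that the crossing-free closure contributes a Kronecker delta $\delta_{\mathcal{A}\mathcal{C}}$ (since its unique state realizes the partition $\mathcal{A}$ with exactly $|\mathcal{A}|$ circles), and use the symmetry of $(a_{\mathcal{A}\mathcal{B}})$ to collapse the sum. You in fact justify the delta computation more explicitly than the paper does.
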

\begin{proof}
We prove it for $L_1$ for the other case is verbatim. By the previous Lemma \ref{Identity_One}, we have:
$$\left\langle L_{1}^{\mathcal{A}}\right\rangle = \sum_{\mathcal{A}, \mathcal{B}\in NC_n}\ a_{\mathcal{B} \mathcal{C}}\left\langle L_1\right\rangle_{\mathcal{B}}\left\langle \vec{\mathcal{A}}*\overrightarrow{full}^{op}\right\rangle_{\mathcal{C}}$$
Because $\left\langle \vec{\mathcal{A}}*\overrightarrow{full}^{op}\right\rangle_{\mathcal{C}}$ equals the Kronecker delta $\delta_{\mathcal{A} \mathcal{C}}$ and the matrix $\left(a_{\mathcal{A} \mathcal{B}}\right)$ is symmetric, we have the result.
\end{proof}

See that $|\mathcal{A}\wedge\mathcal{B}|\geq|\mathcal{A}\vee\mathcal{B}|$ and equality holds if and only if $\mathcal{A}= \mathcal{B}$. In particular, respect to $(-A^{2}-A^{-2})$, the diagonal of $M$ is the maximum degree term of its determinant hence it is nonzero.

\begin{cor}\label{Splitting_Kauffman}
Under the hypothesis of Lemma \ref{Identity_One}, we have the following splitting formula for the Kauffman bracket:
$$\left\langle L\right\rangle = \sum_{\mathcal{A}, \mathcal{B}\in NC_n}\ b_{\mathcal{A} \mathcal{B}}\left\langle L_{1}^{\mathcal{A}}\right\rangle \left\langle L_{2}^{\mathcal{B}}\right\rangle$$
where the matrix $\left(b_{\mathcal{A} \mathcal{B}}\right)$ is the inverse of the matrix $M$.
\end{cor}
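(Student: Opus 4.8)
The plan is to read this Corollary as a purely linear-algebraic consequence of Lemma~\ref{Identity_One} and Corollary~\ref{Identity_Two}, organizing everything around the matrix $M=(a_{\mathcal{A}\mathcal{B}})$ indexed by $NC_n\times NC_n$. First I would record the two structural facts already in hand: the remark preceding the statement shows that $\det M$ is nonzero, since its leading term as a polynomial in $(-A^{2}-A^{-2})$ is the product of the diagonal entries (the diagonal exponents all equal $n-1$, while the off-diagonal ones are strictly smaller because $|\mathcal{A}\wedge\mathcal{B}|>|\mathcal{A}\vee\mathcal{B}|$ off the diagonal). Hence $M$ is invertible over the field $\Q(A)$ of rational functions, and Lemma~\ref{Identity_One} asserts that $M$ is symmetric. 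Consequently $B:=M^{-1}$ exists and is itself symmetric.

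Next I would rewrite the two available identities in matrix form. Introducing column vectors $x_i$ with entries $\langle L_i\rangle_{\mathcal{A}}$ and $y_i$ with entries $\langle L_i^{\mathcal{A}}\rangle$, Corollary~\ref{Identity_Two} reads $y_i = M x_i$ for $i=1,2$, while Lemma~\ref{Identity_One} reads $\langle L\rangle = x_1^{\top} M x_2$. Inverting the first relation gives $x_i = B y_i$.

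The computation then collapses. Substituting $x_i = B y_i$ into the bilinear form yields
\[
\langle L\rangle = (By_1)^{\top} M (By_2) = y_1^{\top}\bigl(B^{\top} M B\bigr) y_2.
\]
Since $M$ is symmetric, so is $B$, whence $B^{\top}MB = BMB = M^{-1}MM^{-1} = M^{-1} = B$. Therefore $\langle L\rangle = y_1^{\top} B y_2 = \sum_{\mathcal{A},\mathcal{B}\in NC_n} b_{\mathcal{A}\mathcal{B}}\langle L_1^{\mathcal{A}}\rangle\langle L_2^{\mathcal{B}}\rangle$, which is the assertion.

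I do not anticipate a genuine obstacle, since the real content already sits in the two preceding results; the only point demanding care is the invertibility of $M$, which is settled over $\Q(A)$ by the nonvanishing of the determinant. One should note in passing that the entries $b_{\mathcal{A}\mathcal{B}}$ are to be understood as rational functions in $A$ rather than Laurent polynomials. If one preferred to avoid invoking $B=M^{-1}$ abstractly, the same conclusion follows by substituting the expansion of Corollary~\ref{Identity_Two} directly into the claimed right-hand side and collecting the coefficient of $\langle L_1\rangle_{\mathcal{A}}\langle L_2\rangle_{\mathcal{B}}$, which reduces to $a_{\mathcal{A}\mathcal{B}}$ via the identity $\sum_{\mathcal{C},\mathcal{D}} a_{\mathcal{A}\mathcal{C}}\,b_{\mathcal{C}\mathcal{D}}\,a_{\mathcal{D}\mathcal{B}} = a_{\mathcal{A}\mathcal{B}}$ (that is, $MBM=M$), so that Lemma~\ref{Identity_One} recovers $\langle L\rangle$; the matrix formulation above is merely the cleaner packaging of this check.
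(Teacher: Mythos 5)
Your argument is correct and is essentially the paper's own proof: both pass to the matrix identities $\langle L\rangle = x_1^{\top}Mx_2$ and $y_i = Mx_i$, invert $M$ (whose determinant is nonzero by the remark on the degrees in $-A^2-A^{-2}$), and use the symmetry of $M$ to conclude $\langle L\rangle = y_1^{\top}M^{-1}y_2$. Your added observations on invertibility over $\Q(A)$ and on the entries $b_{\mathcal{A}\mathcal{B}}$ being rational functions are sensible refinements but do not change the route.
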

\begin{proof}
In matrix notation with column vectors, by Lemma \ref{Identity_One} we have:
$$\left\langle L\right\rangle= \left(\left\langle L_1\right\rangle_{\mathcal{A}}\right)^{T}M\left(\left\langle L_2\right\rangle_{\mathcal{A}}\right)$$
By Corollary \ref{Identity_Two}, $\left(\left\langle L_{i}^{\mathcal{A}}\right\rangle\right) = M.\left(\left\langle L_i\right\rangle_{\mathcal{A}}\right)$. Because the matrix $M$ is symmetric, we have the result:
$$\left\langle L\right\rangle = \left(\left\langle L_{1}^{\mathcal{A}}\right\rangle\right)^{T} \left(M^{-1}\right)^{T}M M^{-1}\left(\left\langle L_{2}^{\mathcal{A}}\right\rangle\right)= \left(\left\langle L_{1}^{\mathcal{A}}\right\rangle\right)^{T} M^{-1}\left(\left\langle L_{2}^{\mathcal{A}}\right\rangle\right)$$
\end{proof}

\begin{teo}\label{Splitting_Jones}
Consider a non trivial alternate cut $C$ of a link $L$ with $2n$ intersection points. Then:
\begin{equation}\label{Jones_Splitting_Formula}
J(L) = \sum_{\mathcal{A}, \mathcal{B}\in NC_n}\ c_{\mathcal{A} \mathcal{B}}\ J\left( L_{1}^{\mathcal{A}}\right)J\left( L_{2}^{\mathcal{B}}\right)
\end{equation}
where the matrix $\left(c_{\mathcal{A} \mathcal{B}}\right)$ is the inverse of the matrix $\left(d_{\mathcal{A} \mathcal{B}}\right)$ with entries:
$$d_{\mathcal{A} \mathcal{B}}= (-t^{1/2}-t^{-1/2})^{n-|\mathcal{A}\wedge\mathcal{B}|+|\mathcal{A}\vee\mathcal{B}|-1}$$
\end{teo}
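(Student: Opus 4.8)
The plan is to obtain the Jones polynomial formula as a specialization of the Kauffman bracket splitting already proved in Corollary \ref{Splitting_Kauffman}, passing through the Kauffman function \eqref{Kauffman_function} and then setting $A = t^{-1/4}$ as in \eqref{Jones_polynomial_def}. First I would take the bracket splitting
$$\left\langle L\right\rangle = \sum_{\mathcal{A}, \mathcal{B}\in NC_n}\ b_{\mathcal{A} \mathcal{B}}\left\langle L_{1}^{\mathcal{A}}\right\rangle \left\langle L_{2}^{\mathcal{B}}\right\rangle$$
and multiply both sides by the normalization factor $(-A)^{-3w(L)}$. The essential input here is the writhe additivity \eqref{writhe}: since $w(L_1^{\mathcal{A}})$ does not depend on $\mathcal{A}$ and $w(L_2^{\mathcal{B}})$ does not depend on $\mathcal{B}$, writing $w_1 := w(L_1^{\mathcal{A}})$ and $w_2 := w(L_2^{\mathcal{B}})$ we have $w(L) = w_1 + w_2$, so the factor splits as $(-A)^{-3w(L)} = (-A)^{-3w_1}(-A)^{-3w_2}$. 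Distributing it into the two bracket factors converts each surgery bracket into its Kauffman function and leaves
$$f_L(A) = \sum_{\mathcal{A}, \mathcal{B}\in NC_n}\ b_{\mathcal{A} \mathcal{B}}\, f_{L_1^{\mathcal{A}}}(A)\, f_{L_2^{\mathcal{B}}}(A).$$
Evaluating at $A = t^{-1/4}$ and using \eqref{Jones_polynomial_def} then produces
$$J(L) = \sum_{\mathcal{A}, \mathcal{B}\in NC_n}\ b_{\mathcal{A} \mathcal{B}}(t^{-1/4})\, J(L_1^{\mathcal{A}})\, J(L_2^{\mathcal{B}}),$$
so the whole problem reduces to identifying $b_{\mathcal{A} \mathcal{B}}(t^{-1/4})$ with the entry $c_{\mathcal{A} \mathcal{B}}$ of $D^{-1}$, where $D = (d_{\mathcal{A} \mathcal{B}})$.

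Next I would track the specialization at the level of matrices. Under $A = t^{-1/4}$ one has $(-A^2 - A^{-2}) \mapsto (-t^{1/2} - t^{-1/2})$, so the matrix $M = (a_{\mathcal{A} \mathcal{B}})$ maps entrywise exactly onto $D$: both are the same polynomial matrix in one variable, $M$ in $q := -A^2 - A^{-2}$ and $D$ in its image $-t^{1/2} - t^{-1/2}$. I would formalize this as a ring homomorphism $\varphi$ (the substitution $A \mapsto t^{-1/4}$) applied to the entries, so that $\varphi(M) = D$. Because the cofactors and the determinant are polynomial expressions in the matrix entries, $\varphi$ commutes with Cramer's rule; hence $\varphi(b_{\mathcal{A} \mathcal{B}}) = c_{\mathcal{A} \mathcal{B}}$ as soon as $\det D = \varphi(\det M)$ is nonzero, which finishes the identification.

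The hard part will be ensuring that inversion survives the specialization, i.e. that $\det D \neq 0$. The remark preceding Corollary \ref{Splitting_Kauffman} already gives that $\det M$ is a nonzero polynomial in $q$, since $|\mathcal{A}\wedge\mathcal{B}| \geq |\mathcal{A}\vee\mathcal{B}|$ with equality exactly when $\mathcal{A} = \mathcal{B}$, so the diagonal contributes the unique top-degree term in $q$. It then suffices to observe that $-t^{1/2} - t^{-1/2}$ is a non-constant Laurent polynomial, whence substituting it for $q$ in the nonzero one-variable polynomial $\det M(q)$ yields a nonzero Laurent polynomial in $t^{1/2}$. Therefore $\det D \neq 0$, the matrix $D$ is invertible over the field of fractions, and $(c_{\mathcal{A} \mathcal{B}}) = D^{-1}$ as claimed.
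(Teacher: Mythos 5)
Your proposal is correct and follows essentially the same route as the paper, whose proof is a one-line appeal to writhe additivity \eqref{writhe}, Corollary \ref{Splitting_Kauffman}, and the definitions \eqref{Kauffman_function}--\eqref{Jones_polynomial_def}. You simply spell out the details the paper leaves implicit, in particular the check that $\det D\neq 0$ survives the substitution $A=t^{-1/4}$, which is a worthwhile addition but not a different argument.
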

\begin{proof}
Because the writhe is additive respect to the cut, relation \eqref{writhe}, by corollary \ref{Splitting_Kauffman} and the definition of the Kauffman function \eqref{Kauffman_function} and the Jones polynomial \eqref{Jones_polynomial_def}, we have the result.
\end{proof}


As an example, consider the case $n$ equals two. Then:

$$\left(d_{\mathcal{A} \mathcal{B}}\right)=
\begin{pmatrix}
-t^{1/2}-t^{-1/2} & 1 \\
1 & -t^{1/2}-t^{-1/2}
\end{pmatrix}$$
and the inverse matrix reads as follows:

$$\left(c_{\mathcal{A} \mathcal{B}}\right)=-\frac{1}{(t^{1/2}+t^{-1/2})^{2}-1}
\begin{pmatrix}
t^{1/2}+t^{-1/2} & 1 \\
1 & t^{1/2}+t^{-1/2}
\end{pmatrix}$$
This gives the following splitting formula:
\begin{eqnarray}\label{SplittingII}
J(L) &=& -\frac{t^{1/2}+t^{-1/2}}{\left(t^{1/2}+t^{-1/2}\right)^{2}-1}\left(J(L_{1})J(L_{2})+J(\widehat{L_{1}})J(\widehat{L_{2}})\right) \\
&&- \frac{1}{\left(t^{1/2}+t^{-1/2}\right)^{2}-1}\left(J(L_{1})J(\widehat{L_{2}})+J(\widehat{L_{1}})J(L_{2})\right) \nonumber
\end{eqnarray}
where $\widehat{L_{i}}$ denotes the trivial surgery. This formula is illustrated in Figure \ref{Jones_Splitting_II}.

\subsection*{Acknowledgment}
The author benefited from C\'atedras CONACYT program.

\end{document}